\documentclass[10pt]{amsart}

 \usepackage{amssymb,amsthm,amsfonts,amsmath}
\newcommand{\ii}{\mathrm{i}}
\newtheorem{thm}{Theorem}

\newtheorem{lem}[thm]{Lemma}

\newtheorem{prop}[thm]{Proposition}

\theoremstyle{definition}
\newtheorem{defn}[thm]{Definition}

\newtheorem{rem}[thm]{Remark}
\newtheorem{cor}[thm]{Corollary}

\newcommand{\cR}{\mathcal{R}}

\newcommand{\cD}{\mathcal{D}\,}
\newcommand{\cH}{\mathcal{H}\,}

\newcommand{\ov}{\overline}

\newcommand{\cN}{\mathcal{N}}

\usepackage{color}

\begin{document}
\begin{title}
{Domain Characterizations of Strong  Commutativity\break of Unbounded Self-Adjoint  Operators}
\end{title}
\author{Konrad Schm\"udgen}
\address{University of Leipzig, Mathematical Institute, Augustusplatz 10, D-04109 Leipzig, Germany}
\email{schmuedgen@math.uni-leipzig.de}
\maketitle

\begin{abstract}
Let $A$ and $B$ be unbounded self-adjoint operators on a Hilbert space. Suppose  that there exists a dense linear subspace  $\cD\subseteq \cD(AB)\cap \cD(BA)$ such that $ABx=BAx$ for $x \in \cD$. We give some characterizations of the strong commutativity of $A$ and $B$ in terms of domain equalities.
\end{abstract}

\bigskip

\textbf{AMS  Subject  Classification (2020)}.
 47B02.\\

\textbf{Key  words:} commuting self-adjoint operators, strong commutativity

\bigskip

\section{Introduction and Main Result}

For unbounded self-adjoint operators on Hilbert space there are  various versions of commutativity. In this paper we will use the following two notions. 

 \begin{defn}\label{defcommute}
Let $A$ and $B$ be  self-adjoint operators on a Hilbert space $\cH$.
We shall say that $A$ and $B$

$\bullet$  \emph{commute} if there exists a dense linear subspace $\cD\subseteq \cD(AB)\cap \cD(BA)$ such that
\begin{align*}ABx=BAx \quad \textrm{for}~~~x \in \cD,
\end{align*}

$\bullet$ \emph{strongly commute} if the spectral measures  $E_A$ and 
$E_B$  of $A$ and $B$ commute, that is, 
\begin{align*} E_A(M)E_B(N)=E_B(N)E_A(M)\quad \textrm{for all Borel subsets}~~~  M,N~~\textrm{of}~~(-\infty,+\infty).
\end{align*}
\end{defn}

The first notion is much weaker than the second. In general, one cannot recover unbounded self-adjoint operators $A$ and $B$ from their restrictions to the dense domain $\cD$. It might be natural to require in addition that $\cD$ is a core for $A$ and $B$. But even then it does not follow   that $A$ and $B$ strongly commute. As first noted by E. Nelson \cite{nelson} there exist self-adjoint operators which  commute on a common core, but   do not strongly commute. Examples of this kind are studied in \cite{fuglede}, \cite{schm84}, \cite{sf}, \cite{Mo}.

In contrast, for strongly commuting self-adjoint operators there is a well-developed theory based on the joint spectral theorem (see e.g. \cite[Theorem 5.23]{schm12}). Clearly, if $A$ and $B$ strongly commute, they also commute  (\cite[Corollary 5.28]{schm12}). Note that two self-adjoint operators $A$ and $B$ commute strongly if and only if the resolvents $(A-\alpha I)^{-1}$ and $(B-\beta I)^{-1}$ commute for some, and then for all,  $\alpha\in \rho(A)$ and $\beta \in \rho(B)$.

The aim of this  note is to derive some new criteria  for the strong commutativity of commuting self-adjoint operators which are based on  domain equalities.

The following theorem is the main result of this paper. Statements (ii)--(v) provide characterizations of the strong commutativity of two commuting self-adjoint operators $A$ and $B$ in terms of domains.

\begin{thm}\label{mainth}
Suppose that $A$ and $B$ are commuting self-adjoint operators on $\cH$. Let $\alpha, \beta, \gamma\in {\rm C}$. Suppose  that $\alpha\in \rho(A)$ and $\beta\in \rho(B)$. Then the following statements are equivalent:
\begin{itemize}
\item[\em (i)] ~ $A$ and $B$ commute strongly.
\item[\em (ii)] ~ $\cD\big((A+\alpha I)(B+\beta I)\big)=\cD\big((B+\beta I)(A+\alpha I)\big).$
\item[\em (iii)] ~ $\cD\big((A+\alpha I)(B+\beta I)\big)\subseteq \cD\big((B+\beta I)(A+\alpha I)\big).$
\item[\em (iv)]~ $\cD\big(\, \ov{(A-\gamma I)(B-\beta I)}\, \big)=\cD((B-\beta I)(A-\gamma I)).$
\item[\em (v)]~ $\cD((A-\gamma I)(B-\beta I))\subseteq\cD((B-\beta I)(A-\gamma I)).$
\end{itemize}
\end{thm}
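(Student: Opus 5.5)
The plan is to show (i) implies each of (ii)--(v) using the joint spectral theorem, then close the cycle via the trivial implications (ii)$\Rightarrow$(iii) and (iv)$\Rightarrow$(v), and finally prove (iii)$\Rightarrow$(i) and (v)$\Rightarrow$(i). Throughout I read the hypotheses as saying that $A+\alpha I$ and $B+\beta I$, respectively $B-\beta I$, have bounded everywhere defined inverses; the sign convention plays no role in the argument.

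\emph{Direct implications.} Assume (i) and let $E$ be the joint spectral measure of $(A,B)$ on $\mathbb{R}^2$. Let $\Phi$ denote the normal operator $\int (s+\alpha)(t+\beta)\,dE(s,t)$. In spectral terms:
\begin{itemize}
\item $x\in\cD((A+\alpha I)(B+\beta I))$ if and only if $\int|t+\beta|^2\,d\langle E x,x\rangle<\infty$ and $\int|s+\alpha|^2|t+\beta|^2\,d\langle Ex,x\rangle<\infty$, so this domain equals $\cD(B)\cap\cD(\Phi)$;
\item symmetrically, $\cD((B+\beta I)(A+\alpha I))=\cD(A)\cap\cD(\Phi)$.
\end{itemize}
Since $|t+\beta|\ge c>0$ and $|s+\alpha|\ge c>0$ on the joint spectrum, the inequalities $|s+\alpha|\le c^{-1}|s+\alpha||t+\beta|$ and $|t+\beta|\le c^{-1}|s+\alpha||t+\beta|$ give $\cD(\Phi)\subseteq\cD(A)\cap\cD(B)$. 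Hence both domains equal $\cD(\Phi)$, which gives (ii) and (iii).

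For (iv) and (v) only $|t-\beta|\ge c$ is available. The same computation gives $\cD((B-\beta I)(A-\gamma I))=\cD(\Psi)\cap\cD(A)=\cD(\Psi)$, where $\Psi=\int(s-\gamma)(t-\beta)\,dE$. The remaining ingredient is the standard fact from the joint functional calculus (cf. \cite[Theorem 5.23]{schm12}) that $\ov{(A-\gamma I)(B-\beta I)}=\Psi$. This yields (iv), and (v) follows from the inclusion $\cD(A-\gamma I)(B-\beta I)\subseteq\cD(\Psi)$.

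\emph{Converse.} Since $A,B$ strongly commute iff their resolvents commute, I will show $RS=SR$, where $R=(B+\beta I)^{-1}$ and $S=(A+\alpha I)^{-1}$. Set $T=(A+\alpha I)(B+\beta I)$ and $T'=(B+\beta I)(A+\alpha I)$. Both are closed bijections onto $\cH$, with $T^{-1}=RS$ and $T'^{-1}=SR$, and $\cD(T)=RS\cH$.
\begin{itemize}
\item Condition (iii) says $\cD(T)\subseteq\cD(T')$.
\item The operator $C:=T'RS$ is closed and everywhere defined, so it is bounded by the closed graph theorem.
\item Applying $SR$ gives $RS=SRC$.
\item The commutativity hypothesis says $Tx=T'x$ for $x\in\cD$, i.e.\ $C=I$ on $T(\cD)$.
\end{itemize}

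\emph{Main obstacle.} The hard step is upgrading $C=I$ on $T(\cD)$ to $C=I$ on all of $\cH$, since $T(\cD)$ need not be dense. I would first argue via adjoints. From $RS=SRC$ we get $S^*R^*=C^*R^*S^*$. Here $S^*,R^*$ are the resolvents of $A,B$ at the conjugate points, and $T^*=(B+\bar\beta I)(A+\bar\alpha I)\supseteq T'$-type operators. Comparing $T\subseteq$ (via $C$) with $T'^{\,*}$ should then force $T'\restriction\cD(T)$ to equal $T$ on all of $\cD(T)$. An alternative is to show that $(T'-T)\restriction\cD(T)=(C-I)T$ is a closable operator vanishing on the dense set $\cD$; the difficulty is precisely that $\cD$ need not be a core for $T$. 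So the decisive point is to exploit boundedness of $C$ together with density of $\cD$ in $\cH$. For (v)$\Rightarrow$(i) the same scheme applies to $(A-\gamma I)(B-\beta I)$ and $(B-\beta I)(A-\gamma I)$, using only the invertibility of $B-\beta I$ and working with the closure in (iv). I expect this to require an extra resolvent shift in $A$, replacing $\gamma$ by a point of $\rho(A)$, before the argument above applies.
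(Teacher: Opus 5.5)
Your direct implications (i)$\Rightarrow$(ii)--(v) are correct and essentially follow the paper. Your setup for the converse ($C=T'RS$ bounded, $RS=SRC$, $C=I$ on $T(\cD)$) is also correct. But the converse carries the whole content of the theorem, and there you stop exactly at the decisive step: (iii)$\Rightarrow$(i) is never proved. The route you point to cannot work as stated, because $T(\cD)$ need not be dense, so knowing $C=I$ there gives nothing.

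The missing idea is to use the commutation hypothesis at the \emph{conjugate} points, i.e.\ on the adjoint side. Since $T^{-1}=RS$ and $T'^{-1}=SR$ are bounded, you have $T^*=(B+\bar\beta I)(A+\bar\alpha I)$ and $T'^*=(A+\bar\alpha I)(B+\bar\beta I)$. Every $d\in\cD$ lies in $\cD(T^*)\cap\cD(T'^*)$ with $T^*d=T'^*d$, because $ABd=BAd$ and the lower-order terms coincide. Hence for every $h\in\cH$, $\langle Ch,d\rangle=\langle T'RSh,d\rangle=\langle RSh,T'^*d\rangle=\langle RSh,T^*d\rangle=\langle h,d\rangle$. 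So $(C-I)h\perp\cD$, and density of $\cD$ forces $C=I$, i.e.\ $RS=SR$. The right statement is therefore $C^*=I$ on the dense set $\cD$ itself, not $C=I$ on $T(\cD)$. This is exactly the paper's Lemma~\ref{crucial}, which the paper phrases through the dense space $\cD_1(A,B)=R_{\bar\alpha}(A)R_{\bar\beta}(B)(I-P_{\alpha,\beta})\cH$ and Proposition~\ref{basicd1ab}(i).

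Your treatment of (v)$\Rightarrow$(i) is likewise only a sketch. The shift you anticipate is in fact immediate once (iii)$\Rightarrow$(i) is available at arbitrary points of $\rho(A)$ and $\rho(B)$. The set $\cD((A-\gamma I)(B-\beta I))=\{x\in\cD(B):(B-\beta I)x\in\cD(A)\}$ does not depend on $\gamma$. If $g$ belongs to it, (v) gives $g\in\cD(A)$ and $(A-\gamma I)g\in\cD(B)$. Since also $g\in\cD(B)$, it follows that $(A-\ii I)g=(A-\gamma I)g+(\gamma-\ii)g\in\cD(B)$. Thus (v) yields $\cD((A-\ii I)(B-\beta I))\subseteq\cD((B-\beta I)(A-\ii I))$. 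This is the inclusion (iii) with $A+\alpha I$ replaced by $A-\ii I$, where $\ii\in\rho(A)$. The paper instead reduces real $\gamma$ to $\gamma=0$ and runs a separate computation with $R_\ii(A)$, again ending in Lemma~\ref{crucial}. Either way, both converse implications rest on the adjoint/density argument above, which your proposal does not supply.
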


 Note that at least one of the numbers $\alpha, \beta, \gamma$ in conditions (ii)--(v) belongs to the resolvent set $\rho(A)$ or $ \rho(B)$. If both numbers are in the spectra of $A$ and $B$,  one cannot conclude the strong commutativity from the corresponding domain equalities, as discussed in Case 3 of  Remark \ref{Remark} below. \smallskip

The proof of Theorem \ref{mainth} will be given in Section \ref{proofmth}.
In Sections \ref{domd1ab} and \ref{productclosed} we develop some preliminaries that will be used in the proof of Theorem \ref{mainth}. Some results of Sections \ref{domd1ab}  are  special cases of  a more general treatment  in \cite{sf}
and they are  of interest in itself for the commutation problem.

All operator-theoretic notions and results used in this paper are standard; they can be found (for instance) in \cite{schm12}. The domain of an operator $T$ is denoted by $\cD(T)$ and $\rho(T)$ denotes the resolvent set of $T$.

\section{The Domain $\cD_1(A,B)$}\label{domd1ab}

In this section, $A$ and $B$ are self-adjoint operators on a Hilbert space $\cH$. Further,   $\alpha \in \rho(A)$ and $\beta\in \rho(B)$ are fixed numbers.

Let $P_{\alpha,\beta}$ the orthogonal projection on the closure of the range of the commutator of the resolvents  $R_\alpha(A)$ and $R_\beta(B))$, that is, $P_{\alpha,\beta}$ is the projection on the closure of $[(R_\alpha(A),R_\beta(B)]\cH. $ \smallskip

\begin{defn}\label{def2}~ $\cD_1(A,B):=R_\alpha(A)R_\beta(B)(I-P_{\ov{\alpha},\ov{\beta}})\cH.$
\end{defn}
\begin{prop}\label{basicd1ab} Then we have:\\(i) $R_\alpha(A)R_\beta(B)x=R_\beta(B)R_\alpha(A)x$ for $x\in(I-P_{\ov{\alpha},\ov{\beta}}) \cH.$\\
(ii)  $ \cD_1(A,B)\subseteq \cD(AB)\cap \cD(BA)$ and $ABx=BAx$ for $x\in \cD_1(A,B)$,\\
(iii) If $\cD$ is a linear subspace of $  \cD(AB)\cap \cD(BA)$ such that $ABx=BAx$ for $x\in \cD$, then $\cD\subseteq \cD_1(A,B)$.
\end{prop}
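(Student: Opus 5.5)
The whole proposition rests on one identification: the subspace $(I-P_{\ov{\alpha},\ov{\beta}})\cH$ is exactly the kernel of the commutator $C:=[R_\alpha(A),R_\beta(B)]$. Once this is established, parts (i)--(iii) follow by direct resolvent calculus. Throughout, $R_\alpha(A)=(A-\alpha I)^{-1}$. For self-adjoint $A$ we have $R_\alpha(A)^*=R_{\ov{\alpha}}(A)$, and likewise for $B$.

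\emph{Step 1 (the kernel of $C$).} Taking adjoints gives
\[
C^*=[R_{\ov{\beta}}(B),R_{\ov{\alpha}}(A)]=-[R_{\ov{\alpha}}(A),R_{\ov{\beta}}(B)].
\]
Hence $\ov{C^*\cH}$ is precisely the range of $P_{\ov{\alpha},\ov{\beta}}$. Since $\ker C=(C^*\cH)^\perp$ for the bounded operator $C$, we obtain $\ker C=(I-P_{\ov{\alpha},\ov{\beta}})\cH$. This is exactly statement (i).

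\emph{Step 2 (part (ii)).} Let $x=R_\alpha(A)R_\beta(B)y$ with $y\in(I-P_{\ov{\alpha},\ov{\beta}})\cH$. By (i) we also have $x=R_\beta(B)R_\alpha(A)y$, so $x\in\cD(A)\cap\cD(B)$. Moreover $Ax=R_\beta(B)y+\alpha x$, and both summands lie in $\cD(B)$, so $x\in\cD(BA)$. Symmetrically, $Bx=R_\alpha(A)y+\beta x\in\cD(A)$, so $x\in\cD(AB)$. The two representations of $x$ then give
\[
(A-\alpha I)(B-\beta I)x=y=(B-\beta I)(A-\alpha I)x .
\]
Expanding both sides, the terms $-\alpha Bx$, $-\beta Ax$ and $\alpha\beta x$ appear on each side. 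They are all defined because $x\in\cD(A)\cap\cD(B)$, so they cancel and leave $ABx=BAx$.

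\emph{Step 3 (part (iii)).} Let $x\in\cD$. All terms in the expansions of $(A-\alpha I)(B-\beta I)x$ and $(B-\beta I)(A-\alpha I)x$ are defined, because $x\in\cD(AB)\cap\cD(BA)\subseteq\cD(A)\cap\cD(B)$. Since $ABx=BAx$, the two vectors coincide; call their common value $y$. Applying the resolvents gives
\[
x=R_\beta(B)R_\alpha(A)y \quad\text{and}\quad x=R_\alpha(A)R_\beta(B)y .
\]
Therefore $Cy=0$, and by Step 1 $y\in(I-P_{\ov{\alpha},\ov{\beta}})\cH$. This gives $x=R_\alpha(A)R_\beta(B)y\in\cD_1(A,B)$.

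The only point needing care is Step 1, namely the sign and conjugation bookkeeping in the adjoint of the commutator. It is what makes $P_{\ov{\alpha},\ov{\beta}}$, rather than $P_{\alpha,\beta}$, the correct projection in Definition \ref{def2}. Everything else is routine domain tracking.
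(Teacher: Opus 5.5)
Your proof is correct and follows essentially the same route as the paper: both identify $(I-P_{\ov{\alpha},\ov{\beta}})\cH$ with the kernel of $[R_\alpha(A),R_\beta(B)]$ via the adjoint relation $[R_\alpha(A),R_\beta(B)]^*=-[R_{\ov{\alpha}}(A),R_{\ov{\beta}}(B)]$, and then read off (ii) and (iii) from the two resolvent representations of $x$. The differences are organizational: you prove the kernel equality once and reuse it in (iii), whereas the paper proves (i) via $T^*=0$ and repeats the orthogonality argument in (iii); and you write out the expansion converting $(A-\alpha I)(B-\beta I)x=(B-\beta I)(A-\alpha I)x$ into $ABx=BAx$, which the paper leaves implicit.
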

\begin{proof}
(i): Since  $ [(R_{\ov{\alpha}}(A),R_{\ov{\beta}}(B)]\cH\subseteq P_{\ov{\alpha},\ov{\beta}}\cH$ by the definition of $P_{\ov{\alpha},\ov{\beta}}$, we have
 $T:=(I-P_{\ov{\alpha},\ov{\beta}})[(R_{\ov{\alpha}}(A),R_{\ov{\beta}}(B)]\equiv 0$. Hence $T^*=-[(R_{\alpha}(A),R_{\beta}(B)](I-P_{\ov{\alpha},\ov{\beta}})=0$, which implies the assertion of (i). \smallskip

(ii): Let $x:=R_\alpha(A)R_\beta(B)u\in \cD_1(A,B)$, where $u\in (I-P_{\ov{\alpha},\ov{\beta}})\cH$. Then $x=R_\beta(B)R_\alpha(A)u$ by (i). Then 
$(B-\beta I)(A-\alpha I)x=u=(A-\alpha I)(B-\beta I)x$. Hence $x\in \cD(BA)\cap \cD(AB)$ and $BAx=ABx$.
\smallskip

(iii):  Let $x\in  \cD\subseteq \cD(AB)\cap \cD(BA)$. Then $x\in \cD(B-\beta I)$ and hence $x=R_\beta(B)y$ for some $y\in \cH$. Further, since $x\in \cD(A)\cap \cD(AB)$, we have $y=(B-\beta I)x\in \cD(A)$, so that $y=R_\alpha(A)u$ for some $u\in \cH$. Thus  $x= R_\beta(B)R_\alpha(A)u$. Similarly,  $x=R_\alpha(A)R_\beta(B)v$, with $v\in \cH$. 

Using $ABx=BAx$ we obtain $u=(B-\beta I)(A-\alpha I)= (A-\alpha I)(B-\beta I)x=v$, that is $u=v$. Therefore, $[R_\alpha(A),R_\beta(B)]u=0$. Since $u\in \cN([R_\alpha(A),R_\beta(B)])$, we have 
$u\bot \cR([R_\alpha(A),R_\beta(B)]^*)=\cR([R_{\ov{\alpha}}(A),R_{\ov{\beta}}(B)])=P_{\ov{\alpha},\ov{\beta}}\cH$. That is, $x=[(R_\alpha(A),R_\beta(B)]u$ with $u\in (I-P_{\ov{\alpha},\ov{\beta}})\cH$, which means that $x\in \cD_1(A,B)$.
\end{proof}

Proposition \ref{basicd1ab} has a number of interesting consequences. It shows that $D_1(A,B)$ is the largest linear subspace $\cD$ of $\cD(AB)\cap \cD(BA)$ for which $ABx=BAx$ for all $x\in \cD$.  From this characteritation it follows in particular that $D_1(A,B)$ is independent on the particular numbers $\alpha\in\rho(A)$ and $\beta\in \rho(B)$ entering into Definition \ref{def2}. Further, Proposition \ref{basicd1ab} gives at once  the following corollary.

\begin{cor}\label{cordense}
Two self-adjoint operators $A$ and $B$ on $\cH$ commute (according to Definition \ref{defcommute}) if and only if $\cD_1(A,B)$ is dense in $\cH$.
\end{cor}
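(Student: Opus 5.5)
Corollary \ref{cordense} follows directly from Proposition \ref{basicd1ab}, so the plan is to check both implications separately, using parts (ii) and (iii) of that proposition.

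\emph{Commutativity implies density.} Suppose $A$ and $B$ commute in the sense of Definition \ref{defcommute}. Then there is a dense linear subspace $\cD\subseteq \cD(AB)\cap\cD(BA)$ with $ABx=BAx$ for all $x\in\cD$. By Proposition \ref{basicd1ab}(iii), $\cD\subseteq\cD_1(A,B)$. A linear subspace that contains a dense subspace is itself dense, so $\cD_1(A,B)$ is dense in $\cH$.

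\emph{Density implies commutativity.} Conversely, suppose $\cD_1(A,B)$ is dense. It is a linear subspace, being the image of the closed subspace $(I-P_{\ov{\alpha},\ov{\beta}})\cH$ under the bounded operator $R_\alpha(A)R_\beta(B)$. By Proposition \ref{basicd1ab}(ii), $\cD_1(A,B)\subseteq\cD(AB)\cap\cD(BA)$ and $ABx=BAx$ for $x\in\cD_1(A,B)$. Taking $\cD:=\cD_1(A,B)$ in Definition \ref{defcommute} shows that $A$ and $B$ commute.

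The argument relies only on these two inclusions, so there is no real obstacle. The one point to keep in mind is the remark preceding the corollary: $\cD_1(A,B)$ does not depend on the choice of $\alpha\in\rho(A)$ and $\beta\in\rho(B)$, since it is the largest subspace of $\cD(AB)\cap\cD(BA)$ on which $AB$ and $BA$ agree. Hence the criterion is well posed for any such $\alpha,\beta$.
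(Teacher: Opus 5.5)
Your proof is correct and is the same argument the paper has in mind when it says the corollary follows at once from Proposition \ref{basicd1ab}. You use part (iii) to show that any commuting dense subspace lies in $\cD_1(A,B)$, and part (ii) to show that $\cD_1(A,B)$ itself can serve as the subspace in Definition \ref{defcommute}.
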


Another interesting fact is the following.
\begin{prop}\label{dimab}  $\dim P_{\alpha,\beta}\cH=\dim P_{\gamma,\delta}\cH$ for $\alpha,\gamma\in\rho(A)$ and $\beta,\delta\in \rho(B)$. 
\end{prop}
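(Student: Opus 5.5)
The plan is to avoid working with the ranges of the commutators directly. Instead I will compare their orthogonal complements, which are kernels, and transport these kernels into one another by a bounded invertible operator. Write $C_{\alpha,\beta}:=[R_\alpha(A),R_\beta(B)]$. Since $C_{\alpha,\beta}^*=-C_{\ov{\alpha},\ov{\beta}}$, we get $\cN(C_{\alpha,\beta})=\cR(C_{\ov{\alpha},\ov{\beta}})^\perp=(I-P_{\ov{\alpha},\ov{\beta}})\cH$. So it suffices to show that for all $\alpha,\gamma\in\rho(A)$ and $\beta,\delta\in\rho(B)$ the closed subspaces $\cN(C_{\alpha,\beta})$ and $\cN(C_{\gamma,\delta})$ have orthogonal complements of equal Hilbert dimension. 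The conjugates then cover all indices, because $\rho(A)$ and $\rho(B)$ are invariant under conjugation.

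\textbf{Step 1 (kernels via $\cD_1$).} By Proposition \ref{basicd1ab}, the map $u\mapsto R_\alpha(A)R_\beta(B)u$ sends $\cN(C_{\alpha,\beta})$ onto $\cD_1(A,B)$, with inverse $x\mapsto (A-\alpha I)(B-\beta I)x$. Hence
$$\cN(C_{\alpha,\beta})=(A-\alpha I)(B-\beta I)\cD_1(A,B).$$
Since $\cD_1(A,B)$ does not depend on the chosen numbers (remark after Proposition \ref{basicd1ab}), the same holds with $\gamma,\delta$ in place of $\alpha,\beta$.

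\textbf{Step 2 (a bounded isomorphism).} Put $V:=(A-\gamma I)R_\alpha(A)$ and $W:=(B-\delta I)R_\beta(B)$. These are bounded and bijective, with inverses $(A-\alpha I)R_\gamma(A)$ and $(B-\beta I)R_\delta(B)$. The key computation is that for $x\in\cD_1(A,B)$
$$(A-\gamma I)(B-\delta I)x = VW(A-\alpha I)(B-\beta I)x.$$
To verify it, I use $ABx=BAx$ on $\cD_1(A,B)$ to rewrite $(A-\alpha I)(B-\beta I)x=(B-\beta I)(A-\alpha I)x$. Then
$$W(B-\beta I)y=(B-\delta I)y=(B-\beta I)y+(\beta-\delta)y \quad\text{for } y=(A-\alpha I)x,$$
so that $W(A-\alpha I)(B-\beta I)x=(B-\delta I)(A-\alpha I)x=(A-\alpha I)(B-\delta I)x$, using commutativity on $\cD_1$ once more. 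Finally $V(A-\alpha I)z=(A-\gamma I)z$ for $z\in\cD(A)$, with $z=(B-\delta I)x$. This domain and commutation bookkeeping is the only delicate point. It must be done carefully, since $x$ lies in $\cD(AB)\cap\cD(BA)$ but not necessarily in the domain of any longer product. Combined with Step 1, the identity gives $\cN(C_{\gamma,\delta})=T\,\cN(C_{\alpha,\beta})$ with $T:=VW$ bounded and boundedly invertible.

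\textbf{Step 3 (dimension count).} For a closed subspace $M$ and a bounded bijection $T$ of $\cH$, $TM$ is closed, and the composition
$$M^\perp\to\cH/M\xrightarrow{\;T\;}\cH/TM\to(TM)^\perp$$
is a bounded linear bijection. A bounded bijection between Hilbert spaces preserves Hilbert dimension: it is an isomorphism of Banach spaces, so in the infinite-dimensional case the density character is preserved. Hence $\dim \cN(C_{\alpha,\beta})^\perp=\dim\cN(C_{\gamma,\delta})^\perp$, that is, $\dim P_{\ov{\alpha},\ov{\beta}}\cH=\dim P_{\ov{\gamma},\ov{\delta}}\cH$. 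Replacing the numbers by their conjugates yields the assertion.
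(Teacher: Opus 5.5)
Your proof is correct and is essentially the paper's argument. Both use the fact that $\cD_1(A,B)$ does not depend on the chosen parameters. Both then transport $(I-P_{\ov{\alpha},\ov{\beta}})\cH$ onto $(I-P_{\ov{\gamma},\ov{\delta}})\cH$ by a bounded bijection built from resolvents, and compare codimensions.

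There are two minor differences. You change both parameters at once with $VW$, checking the intertwining identity directly on $\cD_1(A,B)$. The paper instead changes $\beta\to\delta$ and then $\alpha\to\gamma$ separately, using $(B-\delta I)R_\beta(B)$ and Proposition \ref{basicd1ab}(i). Also, your Step 3 spells out the codimension preservation that the paper only asserts.
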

\begin{proof} Recall that   $$\cD_1(A,B)=R_\alpha(A)R_\beta(B)(I-P_{\ov{\alpha},\ov{\beta}})\cH=R_\alpha(A)R_\delta(B)(I-P_{\ov{\alpha},\ov{\delta}})\cH.$$
Using that  $\cN(R_\alpha(A))=\{0\}$ we obtain 
$$
(B-\delta I)R_\beta (B)(I-P_{\ov{\alpha}, \ov{\beta}})\cH= (I-P_{\ov{\alpha},\ov{\delta}})\cH.
$$
Since $\beta,\delta\in \rho(B)$, the operator  $(B-\delta I)R_\beta(B)$ is an isomorphism of the underlying Hilbert space $\cH$. Hence we conclude that 
${\rm codim}(I-P_{\ov{\alpha}, \ov{\beta}})\cH ={\rm codim}(I-P_{\ov{\alpha},\ov{\delta}})\cH$, so that
\begin{align}\label{dimabcd}
\dim P_{\ov{\alpha}, \ov{\beta}}\cH =\dim P_{\ov{\alpha},\ov{\delta}}\cH.
\end{align}
By Proposition \ref{basicd1ab}(i) we have also  $$\cD_1(A,B)=R_\delta(B)R_\alpha(A)(I-P_{\ov{\alpha},\ov{\delta}})\cH=R_\delta(B)R_\gamma(A)(I-P_{\ov{\gamma},\ov{\delta}})\cH.$$
Then the  same reasoning gives
$\dim P_{\ov{\alpha}, \ov{\delta}}\cH =\dim P_{\ov{\gamma},\ov{\delta}}\cH.$ Combined with (\ref{dimabcd}) we get
$\dim P_{\ov{\alpha}, \ov{\beta}}\cH =\dim P_{\ov{\gamma},\ov{\delta}}\cH.$ Replacing $\alpha,\beta,\gamma,\delta$ by their complex conjugates yields the assertion.
\end{proof}
Proposition \ref{dimab} justifies the following definition.
\begin{defn}
$d(A,B):= \dim P_{\alpha,\beta}\cH$ for $\alpha\in \rho(A)$,  $\beta \in \rho(B)$ is called the \emph{defect number} of the pair $\{A,B\}$ of self-adjoint operators on $\cH$.
\end{defn}
Clearly, $d(A,B)=0$ if and only if $  P_{\alpha,\beta}=0$, or equivalently,  $A$ and $B$ strongly commute. 

\begin{rem}\label{Remark}
In this Remark we discuss Theorem \ref{mainth} in the case  $\alpha=\beta =\gamma=0$. Assume throughout that $A$ and $B$ are commuting self-adjoint operators on $\cH$.\smallskip

Case 1:  $0\in \rho(A)\cap \rho(B)$.\\  Then condition (ii) applies. Hence $A$ and $B$ strongly commute if and only if $\cD(AB)=\cD(BA)$. 
Further, by \cite[Proposition 1.11]{sf}, $AB\lceil \cD_1(A,B)$ is a densely defined closed symmetric operator with deficiency indices $(d(A,B),d(A,B))$. This gives a nice interpretation of the defect number $d(A,B)$.

Case 2:  $0\in \rho(B)$. \\ 
Then  one can use condition (iii), so $A$ and $B$ strongly commute if and only if $\cD(\ov{AB})=\cD(BA)$. 

Case 3: Suppose that  $AB\lceil \cD_1(A,B)$ is essentially self-adjoint.\\Then it follows easily that $\ov{AB}=\ov{BA}$ and this operator is self-adjoint. However, if $0$ is neither in $ \rho(A)$ nor in $\rho(B)$, one cannot conclude that $A$ and $B$ strongly commute. For instance, for the example developed in \cite{fuglede} the operator $AB\lceil \cD_1(A,B)$  is essentially self-adjoint  \cite[Theorem 2]{fuglede} and $A$ and $B$ do not commute strongly.  It can be shown that this pair has defect number $d(A,B)=1$. Pairs of self-adjoint operators with defect number one are extensively studied in \cite{sf}.
\end{rem} 
\section{Closedness of the product of closed operators}\label{productclosed}

\begin{prop}\label{lemmaclosedop}
Let $T$ and $S$ be closed linear operators on a Hilbert space $\cH$. Then the operator $TS$ is closed if and only if   there exists a constant $c>0$ such that
\begin{align}\label{aabinequality}
\|Sx\|\leq c~ (\|TSx\|+ \|x\|)\quad \textrm{for all}~~~ x\in \cD(TS).
\end{align}
\end{prop}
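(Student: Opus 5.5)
We prove the two directions separately. Throughout we equip $\cD(TS)$ with the graph norm of $TS$. The key observation is that, because $T$ and $S$ are closed, membership in $\cD(TS)$ and the value of $TS$ are preserved under a limit procedure as soon as one additionally controls the intermediate vectors $Sx_n$. The inequality (\ref{aabinequality}) supplies exactly this control, and the converse direction comes from the closed graph theorem.

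\emph{Sufficiency.} Assume (\ref{aabinequality}) holds. Let $x_n\in\cD(TS)$ with $x_n\to x$ and $TSx_n\to y$. Applying (\ref{aabinequality}) to $x_n-x_m$ gives
\begin{align*}
\|Sx_n-Sx_m\|\le c\,(\|TSx_n-TSx_m\|+\|x_n-x_m\|),
\end{align*}
so $(Sx_n)$ is a Cauchy sequence and converges to some $z$. Closedness of $S$ then yields $x\in\cD(S)$ and $Sx=z$. Now $Sx_n\to Sx$ and $T(Sx_n)\to y$, so closedness of $T$ gives $Sx\in\cD(T)$ and $TSx=y$. Hence $x\in\cD(TS)$ and $TSx=y$, that is, $TS$ is closed.

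\emph{Necessity.} Assume $TS$ is closed. Then $\cD(TS)$ with the graph norm $\|x\|_{TS}=\|x\|+\|TSx\|$ is a Banach space (in fact a Hilbert space with the equivalent graph inner product). Consider the map $S\colon(\cD(TS),\|\cdot\|_{TS})\to\cH$. We show it is closed. Let $x_n\to x$ in graph norm and $Sx_n\to z$ in $\cH$. Then $x_n\to x$ in $\cH$, and closedness of $S$ gives $z=Sx$. By the closed graph theorem this everywhere defined map is bounded, which is precisely the inequality (\ref{aabinequality}) for some constant $c>0$.

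There is no real obstacle here. The only point requiring care is in the necessity direction: one must verify that the restriction of $S$ is closed with respect to the graph norm of $TS$ rather than the norm of $\cH$. This follows immediately because graph-norm convergence implies norm convergence.
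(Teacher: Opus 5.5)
Your proposal is correct and follows essentially the same route as the paper. Sufficiency comes from applying the inequality to $x_n-x_m$ and then using closedness of $S$ and then of $T$. Necessity comes from the closed graph theorem applied to $S$ restricted to $\cD(TS)$ equipped with the graph norm of $TS$.
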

\begin{proof}   First we suppose that condition (\ref{aabinequality}) is satisfied. Let $(x_n)_{n\in N}$ be a sequence of vectors $x_n\in \cD(TS)$ such that the sequences  $(x_n)_{n\in N}$ and $(TSx_n)_{n\in N}$ converge in $\cH$.  Let $u:=\lim_n x_n$ and $v:=\lim_n TSx_n$. From (\ref{aabinequality}), applied to $x=x_n-x_k$,  it follows that $(Sx_n)_{n\in N}$ is a Cauchy sequence. Hence it converges. Since the operator $S$ is closed, we conclude that $Su:=\lim_n Sx_n$. Using that $T$ is closed we obtain  $TSu:=\lim_n TSx_n$, which proves that $TS$ is closed.

Now we assume that the operator $TS$ is closed. Then the domain $\cD(TS)$ is a Hilbert space, denoted $\cH_{TS}$, equipped with the scalar product
$$
\langle x,y\rangle_{TS}:=\langle TSx,TSy\rangle +\langle x,y\rangle,~~~ x,y \in \cD(TS).
$$ 
We show that the operator $S_0:=S\lceil \cD(TS)$  of the Hilbert space $\cH_{TS}$ into the Hilbert space $\cH$  is closed. For suppose that  $(x_n)_{n\in N}$ be a sequence with $x_n\in \cD(TS)$ 
which converges to $x$ in $\cH_{TS}$ such that  $(S_0x_n)_{n\in N}$ converges to $y$ in $\cH$. In particular,  $x\in \cD(TS)$ and $x=\lim_n x_n$  in $\cH$. Therefore, since the operator $S$ on $\cH$ is closed,  $Sx=y$. From $x\in \cD(TS)$ we get $S_0x=Sx=y$. This shows that the mapping $S_0:\cH_{TS}\to \cH$ is closed. By the closed graph theorem,  $S_0:=S\lceil \cD(TS)$ is continous in the corresponding Hilbert space norms. This means that there is a constant $c>0$ such that  (\ref{aabinequality}) holds.
\end{proof}

\section{Proof of the Main Theorem}\label{proofmth}

The following  lemma is the crucial technical ingredient for the proofs of our main results.
\begin{lem}\label{crucial} Let $A$ and $B$ be commuting self-adjoint operators on $\cH$. Let $\alpha\in \rho(A)$,  $\beta\in \rho(B)$ and   $x,y\in \cH$. If  $R_\alpha(A)R_\beta(B)y=R_\beta(B)R_\alpha(A)x,$ then $y=x$.
\end{lem}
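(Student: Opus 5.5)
The idea is to pass the resolvent identity through the adjoints of the products $(A-\alpha I)(B-\beta I)$ and $(B-\beta I)(A-\alpha I)$ and test it against the dense subspace $\cD$ on which $A$ and $B$ commute. The point is that we never need $z$ itself to lie in $\cD$. We only need the identity to hold weakly against vectors of $\cD$.

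\emph{Step 1: produce a common vector.} Put $z:=R_\alpha(A)R_\beta(B)y=R_\beta(B)R_\alpha(A)x$. The first representation gives $z\in\cD\big((A-\alpha I)(B-\beta I)\big)$ with $(A-\alpha I)(B-\beta I)z=y$. The second gives $z\in \cD\big((B-\beta I)(A-\alpha I)\big)$ with $(B-\beta I)(A-\alpha I)z=x$. So it suffices to show that these two products agree on $z$.

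\emph{Step 2: move the operators to a test vector.} Let $\cD\subseteq \cD(AB)\cap\cD(BA)$ be the dense subspace from Definition \ref{defcommute}, on which $ABw=BAw$. Each $w\in\cD$ lies in $\cD(A)\cap\cD(B)$. Since $A$ and $B$ are self-adjoint, moving $A-\alpha I$ and then $B-\beta I$ across the inner product gives
\begin{align*}
\langle y,w\rangle=\langle (A-\alpha I)(B-\beta I)z,w\rangle=\langle z,(B-\ov{\beta} I)(A-\ov{\alpha} I)w\rangle ,
\end{align*}
and similarly
\begin{align*}
\langle x,w\rangle=\langle z,(A-\ov{\alpha} I)(B-\ov{\beta} I)w\rangle .
\end{align*}

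\emph{Step 3: use commutativity on $\cD$.} Expanding both products,
\begin{align*}
(B-\ov{\beta} I)(A-\ov{\alpha} I)w=BAw-\ov{\alpha}Bw-\ov{\beta}Aw+\ov{\alpha}\,\ov{\beta}w ,
\end{align*}
and the analogous expansion of $(A-\ov{\alpha} I)(B-\ov{\beta} I)w$ has $ABw$ in place of $BAw$. Since $ABw=BAw$ for $w\in\cD$, the two vectors coincide. Hence $\langle y-x,w\rangle=0$ for all $w$ in the dense set $\cD$, and therefore $y=x$.

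The only point needing care is the domain bookkeeping in Step 2. One must check that $(B-\beta I)z\in\cD(A)$, that $w\in\cD(A)$ with $(A-\ov{\alpha}I)w\in\cD(B)$, and symmetrically for the other product. All of these follow directly from the resolvent representations of $z$ and from $w\in\cD(AB)\cap\cD(BA)$.
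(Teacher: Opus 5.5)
Your proof is correct once you fix a bookkeeping slip in Step 1. It is a more elementary version of the paper's argument.

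\textbf{The slip.} From $z=R_\alpha(A)R_\beta(B)y$ you get $z\in\cD(A)$ and $(A-\alpha I)z=R_\beta(B)y$. Hence $(B-\beta I)(A-\alpha I)z=y$, not $(A-\alpha I)(B-\beta I)z=y$. Likewise, $z=R_\beta(B)R_\alpha(A)x$ gives $(A-\alpha I)(B-\beta I)z=x$; compare (\ref{domequ1})--(\ref{domequ2}). So $x$ and $y$ are interchanged throughout Steps 1 and 2. The correct identities are $\langle x,w\rangle=\langle z,(B-\ov{\beta}I)(A-\ov{\alpha}I)w\rangle$ and $\langle y,w\rangle=\langle z,(A-\ov{\alpha}I)(B-\ov{\beta}I)w\rangle$. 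Since $x=y$ is symmetric, nothing else changes, but fix it anyway. Your claim $(A-\alpha I)(B-\beta I)z=y$, combined with the true identity $(A-\alpha I)(B-\beta I)z=x$, would already be the lemma, so the mislabeling blurs what is known and what is being proved.

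After the fix, all domain checks hold:
\begin{itemize}
\item $(B-\beta I)z=R_\alpha(A)x\in\cD(A)$;
\item $(A-\alpha I)z=R_\beta(B)y\in\cD(B)$;
\item $w\in\cD(AB)\cap\cD(BA)$ gives $(A-\ov{\alpha}I)w\in\cD(B)$ and $(B-\ov{\beta}I)w\in\cD(A)$.
\end{itemize}

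\textbf{Comparison with the paper.} Both proofs rest on the same duality idea. The paper tests against the maximal commutativity domain $\cD_1(A,B)=R_{\ov{\alpha}}(A)R_{\ov{\beta}}(B)(I-P_{\alpha,\beta})\cH$ and works only with bounded resolvents. For this it needs Proposition \ref{basicd1ab}(i), to interchange $R_{\ov{\alpha}}(A)$ and $R_{\ov{\beta}}(B)$ on $(I-P_{\alpha,\beta})\cH$, and Corollary \ref{cordense}, for density.

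You instead test directly against the given dense $\cD$ and move the unbounded factors across the inner product. So you need nothing beyond Definition \ref{defcommute}; the cost is the unbounded-domain bookkeeping, which the resolvent formulation avoids.

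The two computations are really one. For $w=R_{\ov{\alpha}}(A)R_{\ov{\beta}}(B)(I-P_{\alpha,\beta})u$ one has $(B-\ov{\beta}I)(A-\ov{\alpha}I)w=(A-\ov{\alpha}I)(B-\ov{\beta}I)w=(I-P_{\alpha,\beta})u$. Thus the paper's computation is yours, rewritten through resolvents, on the larger test space $\cD_1(A,B)\supseteq\cD$ (Proposition \ref{basicd1ab}(iii)).
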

\begin{proof} Let $u\in \cH$. Using Proposition \ref{basicd1ab}(i) we derive
\begin{align}
&\langle y, R_{\ov{\alpha}} (A)R_{\ov{\beta}}(B) (I-P_{\alpha, \beta})u\rangle\label{comy} \\ &= \langle y, R_{\ov{\beta}}(B)R_{\ov{\alpha}} (A) (I-P_{\alpha, \beta})u\rangle\nonumber \\ &=\langle R_\alpha(A)R_\beta(B)y, (I-P_{\alpha, \beta})u \rangle\nonumber \\  & =\langle R_\beta(B)R_\alpha(A)x,(I-P_{\alpha, \beta})u \rangle\nonumber \\ &=\langle x, R_{\ov{\alpha}} (A)R_{\ov{\beta}}(B) (I-P_{\alpha, \beta})u\rangle. \label{comx}
\end{align}
Since $A$ and $B$ commute, $\cD_1(A,B) = R_{\ov{\alpha}} (A)R_{\ov{\beta}}(B) (I-P_{\alpha, \beta})\cH$ is dense by Corollary \ref{cordense}. Therefore, comparing  (\ref{comy}) and (\ref{comx}) it follows that $y=x$. 
\end{proof}

Now we begin the proof of Theorem \ref{mainth}. Clearly, 
\begin{align}\label{domequ1} 
\cD((A-\alpha I)(B-\beta I))&=\cR(R_\beta(B)R_\alpha(A)),\\  \cD((B-\beta I)(A-\alpha I))&=\cR(R_\alpha(A)R_\beta(B)).\label{domequ2}
\end{align}

(i)$\to$(ii): Since $A$ and $B$ strongly commute, the resolvents $R_\alpha(A)$ and $R_\beta(B)$ commute.  Hence the ranges  $R_\beta(B)R_\alpha(A)\cH$ and $R_\alpha(A)R_\beta(B)\cH$ coincide and so do the domains $\cD((A-\alpha I)(B-\beta I))$ and $\cD((B-\beta I)(A-\alpha I))$ by  (\ref{domequ1}) and  (\ref{domequ2}). \smallskip

(iii)$\to$(i):
 Let $x\in \cH$. Since $R_\beta(B)R_\alpha(A))x \in\cD((A-\alpha I)(B-\beta I))$ by (\ref{domequ1}), it follows from  (iii) and (\ref{domequ2}) that there exists $y\in \cH$ such that $R_\beta(B)R_\alpha(A))x=R_\alpha(A)R_\beta(B)y$. Therefore  Lemma \ref{crucial} implies that $x=y$. Then $R_\beta(B)R_\alpha(A))x=R_\alpha(A)R_\beta(B)x$. Thus the resolvents $R_\alpha(A)$ and $R_\beta(B)$ commute. Hence $A$ and $B$ strongly commute.
\smallskip

(i)$\to$(iv): By the spectral theorem \cite[Theorem 5.23]{schm12} for the strongly commuting self-adjoint operators $A$ and $B$ there exists a joint spectral measure $E$ on $R^2$ such that 
\begin{align}
A=\int_{R^2} t_1 \, dE(t_1,t_2)\quad \textrm{and}\quad  B=\int_{R^2} t_2 \, dE(t_1,t_2).
\end{align}
Then, setting  $f_1(t_1,t_2)=t_1-\gamma$ and $f_2(t_1,t_2)=t_2-\beta$, the operators $A-\gamma I$ and $B-\beta I$ are the spectral integrals $I(f_1)=\int f_1 dE$ and $I(f_2)=\int f_2 dE$, respectively. Then $I(f_1f_2)=\ov{I(f_1)I(f_2)}=\ov{I(f_2)I(f_1)}$ by the product formula of the functional calculus \cite[Theorem 4.16(iii)]{schm12}. This yields 
\begin{align}\label{closab}\ov{(A-\gamma I)(B-\beta I)}= \ov{(B-\beta I)(A-\gamma I)}.
\end{align}
Since $\beta \in \rho(B)$, there is a constant $c>0$ such that $\|(B-\beta I)y\|\geq c\|y\|$ for all $y\in \cD(B)$. This implies that condition (\ref{aabinequality}) is satisfied with $T:=B-\beta I$ and $S:= A-\gamma I$. Therefore, by Proposition \ref{lemmaclosedop}, the operator $TS$ is closed, that is,  $ \ov{(B-\beta I)(A-\gamma I)} =(B-\beta I)(A-\gamma I)$. Inserting this into (\ref{closab}) and considering the domains of both sides we obtain (iv).\smallskip

(v)$\to$(i):
If $\gamma$ is not real, then $\gamma \in \rho(A)$ and the assertion follows from the implication (iii)$\to$(i). If $\gamma $ is real, then $A-\gamma I$ is a self-adjoint operator and we have 
$\cD_1(A-\gamma I,B)=\cD_1(A,B)$. Therefore,  we can assume without loss of generality that $\gamma=0$. Then (v) means that 
\begin{align}\label{domequ11}
\cD(A(B-\beta I))\subseteq\cD((B-\beta I)A).
\end{align}

Let $u\in \cD(A)$ and set $v:=R_\beta(B)u$. Then, since  $(B-\beta I)v=u$, we have $v\in \cD(A(B-\beta I))$. Therefore,   $v\in \cD((B-\beta I)A)$ by  (\ref{domequ11}). In particular, $v\in \cD(A)$, so there exists a vector $w\in \cH$ such that $v=(A-{\rm i} I)^{-1} w$. Then  $Av=w+{\rm i}  (A-\ii I)^{-1} w=(w+\ii v)\in \cD(B-\beta I)$. Since $v$ is also in $\cD(B+\beta I)$, $(A-\ii I)v\in \cD(B-\beta I)$ and hence $(A-\ii I)v=R_\beta (B) y$ for some $y\in \cH$. Then $(B-\beta I)(A-\ii I)v=y$ and hence $v=R_{\ii}(A)R_\beta(B)y$. On the other hand, $(A-\ii I)(B-\beta I)v=(A-\ii Iu$ yields $v= R_\beta (B)R_{\ii}(A)(A-\ii I)u.$ Therefore, applying Lemma \ref{crucial} we obtain $y=(A-\ii I)u.$ Hence
\begin{align}\label{comre1}
R_\beta (B)y= R_\beta (B)(A-\ii I)u.
\end{align}
Further, since  $v=R_\beta (B)u$, we have
\begin{align}\label{comre2}
 R_\beta (B)y=(A-\ii I)v=(A-\ii I)R_\beta (B)u.
 \end{align}
This holds for all $u\in \cD(A)$. Since $A$ is self-adjoint, $(A-\ii I)\cD(A)=\cH.$ Setting $z=(A-\ii I)u$ it follows from (\ref{comre1}) and (\ref{comre2}) that $R_\beta (B)R_{\ii}(A)z=R_\ii (A) R_\beta (B)z$ for all $z\in \cH$. Hence $A$ and $B$ strongly commute, that is, (i) is satisfied.
 \smallskip
 
 Since the implications (ii)$\to$(iii) and (iv)$\to$(v) are trivial, the proof of Theorem \ref{mainth} is complete.

\bigskip

{\bf Acknowledgement:} I want to thank Prof. H. Mortad for an interesting email correspondence that inspired me  to  study the problem treated in this paper.


\bibliographystyle{plain}

\begin{thebibliography}{99}

\bibitem{fuglede} Fuglede, B.: Conditions for two self-adjoint operators to commute or to satisfy the Weyl relation, Math. Scand. {\bf 51}(1982), 163--178.




\bibitem{Mo} Mortad, H.: {\it Counterexamples in Operator Theory}, Birkh\"auser-Verlag, Basel, 2022.



\bibitem{nelson}  Nelson, E.: Analytic vectors, Ann. Math. {\bf 70}(1959), 572--614.


\bibitem{schm84} Schm\"udgen, K.: On commuting unbounded self-adjoint operators. I, Acta Sci. Math. (Szeged)  {\bf 47}(1984), 131--146. 

\bibitem{sf} Schm\"udgen, K. and Friedrich, J.: On commuting unbounded self-adjoint operators. II, J. Integral Equ. Operator Theory {\bf 7}(1984), 815--867. 

\bibitem{schm12} Schm\"udgen, K., \emph{Unbounded Self-adjoint Operators on Hilbert Space}, Graduate Texts in Mathematics {\bf 265}, Springer, Cham, 2012.














\end{thebibliography}

\end{document}